\spnewtheorem{assumption}[theorem]{Assumption}{\bf}{\rm}
\newcommand{\ZZ}{\mathbb{Z}}
\newcommand{\QQ}{\mathbb{Q}}
\newcommand{\FF}{\mathbb{F}}
\newcommand{\CC}{\mathbb{C}}
\newcommand{\PP}{\mathbb{P}}
\newcommand{\Gal}{\mathop{\rm Gal}\nolimits}
\newcommand{\m}{\mathfrak{m}}
\newcommand{\p}{\mathfrak{p}}
\newcommand{\OO}{\mathcal{O}}
\newcommand{\Spec}{\mathop{\rm Spec}}
\newcommand{\Frac}{\mathop{\rm Frac}}
\newcommand{\Xb}{\bar{X}}
\newcommand{\sing}{{\rm sing}}
\begin{document}

\title*{Desingularization of arithmetic surfaces: algorithmic aspects}

\author{Anne Fr\"uhbis-Kr\"uger and Stefan Wewers}
\authorrunning{Fr\"uhbis-Kr\"uger, Wewers}
\institute{Institut f\"ur Algebraische Geometrie, Leibniz Universität Hannover\\
\email{anne@math.uni-hannover.de}\\
Institut f\"ur Reine Mathematik, Universit\"at Ulm\\
\email{stefan.wewers@uni-ulm.de}}

\maketitle

\abstract{The quest for regular models of arithmetic surfaces allows different viewpoints and approaches: using valuations or a covering by charts. In this article, we sketch both approaches and then show in a concrete example, how 
surprisingly beneficial it can be to exploit properties and techniques from 
both worlds simultaneously.}

\section{Introduction}
\label{sec:intro}

Resolution of singularities in dimension $2$ was first proved by Jung in 1908
\cite{Jung08}, but it was not until Hironaka's work in 1964 \cite{Hironaka64}
that this could also be mastered in dimensions beyond $3$. However, Hironaka's
result only applies to characteristic zero, but not to positive or mixed
characteristic. There the general question is still wide open with partial
results for low dimensions. In particular, Lipman gave a construction for
$2$-dimensional schemes in full generality in \cite{Lipman78}.

Lipman's result includes the case of an {\em arithmetic surface},
i.e. integral models of curves over number fields. In fact, the existence of
(minimal) regular models of curves over number fields is a cornerstone of
modern arithmetic geometry. Important early results are for instance the
existence of a minimal regular model of an elliptic curve by N\'eron
(\cite{Neron64}) and {\em Tate's algorithm} (\cite{Tate75}) for computing it
explicitly. 

In this paper we study a particular series of examples of surface singularites
which is a special case of a construction due to Lorenzini
(\cite{Lorenzini10}, \cite{Lorenzini14}). The singularity in question is a
{\em wild quotient singularity}. More precisely, the singular point lies on an
arithmetic surface of mixed characteristic $(0,p)$ which is the quotient of a
regular surface by a cyclic group of prime order $p$, such that the group
action has isolated fixed points. We prove that in our example one obtains a
series of rational determinantal singularities of multiplicity $p$, and we are
able to write down explicit equations for these (see Proposition
\ref{prop:example1}).

Determinantal rings (of expected codimension) are well-studied objects in 
commutative algebra: the free resolution is the Eagon-Northcott complex and 
hence many invariants of the ring such as projective dimension, depth, 
Castelnuovo-Mumford regularity, etc. are known (see e.g. \cite{EisenbudBook},
\cite{BrunsVetter}). Beyond that, such singularities (in the geometric case)
are an active area of current research in singularity theory studying e.g. 
classification questions, invariants, notions of equivalence and topological 
properties, see e.g. \cite{FN}, \cite{NOT}, \cite{Z}. We show, by a direct 
computation, that the resolution in our arithmetic setting is completely 
analogous to the geometric case.

Both for deriving the equations of our singularities and for resolving them,
we employ and mix two rather different approaches to represent and to compute
with arithmetic surfaces. The first approach is more standard and consists in
representing a surfaces as a finite union of affine charts, and the coordinate
ring of each affine chart as a finitely generated algebra over the ground
ring. From this point of view, computations with arithmetic surfaces can be
performed with standard tools from computer algebra, like standard
bases (e.g. in {\sc Singular} \cite{DGPS}). However, these techniques are not 
yet as mature in the arithmetic case as they are in the geometric case.

The second approach uses valuations as its main tool. We work over a discrete
valuation ring $R$. An arithmetic surface $X$ over $\Spec R$ is considered as
an $R$-model of its generic fiber $X_K$ (a smooth curve over $K={\rm
  Frac}(R)$). Then any (normal) $R$-model $X$ of $X_K$ is determined by a
finite set $V(X)$ of discrete valuations on the function field of $X_K$
corresponding to the irreducible components of the special fiber of $X$. A
priori, it is not clear how to extract useful information about the model $X$
from the set $V(X)$. Nevertheless, in joint work with J.\ R\"uth the second
named author has used this technique successfully for computing semistable
reduction of curves (see e.g.\ \cite{superp}).

\vspace{2ex} The paper is structured as follows. In Section
\ref{sec:background} we give some general definitions concerning arithmetic
surfaces, and we present our two approaches for representing them explicitly.
Section \ref{sec:wildquot} then presents our series of wild quotient
singularities. In the final section, we compute, in one concrete example of
our wild quotient singularities, an explicit desingularization.

\section{Arithmetic surfaces and models of curves}\label{sec:background}

\subsection{General definitions}

\begin{definition}
  By a {\em surface} we mean an integral and noetherian scheme $X$ of
  dimension $2$. An {\em arithmetic surface} is a surface $X$ together with a
  faithfully flat morphism $f:X\to S=\Spec(R)$ of finite type, where $R$ is a
  Dedekind domain. To avoid technicalities, we always assume that $R$ (and
  hence $X$) is excellent. Moreover, we will assume in addition that $X$ is
  normal, unless we explictly say otherwise.
\end{definition}

A common situation where arithmetic surfaces occur is the following. Let $R$
be a Dedekind domain, $K=\Frac(R)$ and $X_K$ a smooth and projective curve
over $K$. An {\em $R$-model} of $X_K$ is an arithmetic surface $X\to\Spec(R)$,
together with an identification of $X_K$ with the generic fiber of $X$,
i.e.\ $X_K=X\otimes_R K$.

\vspace{1ex}
For the following discussion we fix an arithmetic surface
$X\to\Spec(R)$. We write $X^\sing$ for the subset of points whose
local ring is not regular. Since we assume that $X$ is normal, $X^\sing$ is
closed of codimension $2$ and hence consists of a finite set of closed points
of $X$. A point $\xi\in X^\sing$ is called a {\em
  singularity} of $X$. (If we drop the normality condition, then $X^\sing$ may
also have components of codimension $1$.)

By a {\em modification} of $X$ we mean a proper
birational map $f:X'\to X$. A modification is an isomorphism outside a finite
set of closed points. If  $f$ is an isomorphism away
from a single point $\xi\in X$, then $\xi$ is called the {\em center} of the
modification and $E:=f^{-1}(\xi)\subset X'$ the {\em exceptional fiber} or {\em 
exceptional locus} (we endow $E$ with the reduced subscheme structure). Note
that $E$ is a connected scheme of dimension one. We will use the notation 
\[
   E = \cup_{i=1}^n C_i,
\]
where the $C_i$ are the irreducible components. Each of them is a projective
curve over the residue field $k=k(\xi)$. If the modification changes more 
than a single point, we will still denote the exceptional locus by $E$, but
$E$ obviously does not need to be connected any more.

\begin{definition}
  Let $p:X\to S$ be an arithmetic surface and $\xi\in X^\sing$ a
  singularity. A {\em desingularization} of $\xi\in X$ is a modification
  $f:X'\to X$ with center $\xi$ and exceptional fiber $E=f^{-1}(\xi)$ such
  that every point $\xi'\in E$ is a regular point of $X'$. A desingularization
  of $X$ is a modification consisting of desingularizations at all points of
  $X^\sing$.
\end{definition}

By a theorem Lipman (\cite{Lipman78}), a desingularization of $X$ always
exists by means of a sequence of normalizations and blow-ups.  Depending on
the situation we often want $f$ to satisfy further conditions.  We list some
of them:
\begin{enumerate}[(a)]
\item
  The exceptional divisor $E$ is a normal crossing divisor of $X'$.
\item
  Let $s:=p(x)$. Then the fiber $X_s'$ of $X'$ over $s$ is a normal crossing
  divisor on $X'$ (when endowed with the reduced subscheme structure). 
\item
  The desingularization $f:X'\to X$ is minimal (among all desingularizations
  of $\xi\in X$).
\item
  $f:X'\to X$ is minimal among all desingularizations satisfying (a) (resp.\
  (b)).
\end{enumerate}

Choosing a different approach than Lipman and avoiding normalizations 
completely, Cossart, Janssen and Saito proved a desingularization algorithm
relying only on blow-ups at regular centers in \cite{CJS}, see also 
\cite{CSch}. The approach allows to additionally satisfy yet another rather 
common condition:
\begin{enumerate}
\item[(e)] If $X \subset W$ for some regular scheme\footnote{as before $W$ 
should be excellent, noetherian, integral}, then desingularization of $X$ can 
be achieved by modifications of $W$ which are isomorphisms outside $X^\sing$. 
\end{enumerate} 

\subsection{Presentation by affine charts}\label{subsec:charts}

We are interested in the problem of computing a desingularization $f:X'\to X$
of a given singularity $\xi\in X$ on an arithmetic surface explicitly. Before
we can even state this problem precisely, we have to say something about the
way in which the surface $X$ is represented.  

The most obvious way\footnote{thanks to Grothendieck} to present $X$ is to
write it as a union of affine charts,
\[
    X = \cup_{j=1}^r U_j, \quad U_j = \Spec A_j.
\]
Here each $A_j$ is a finitely generated $R$-algebra whose fraction field is
the function field $F(X)$ of $X$. After choosing a set of generators of
$A_j/R$, we can obtain a presentation `by generators and relations'. This
means that 
\[
    A_j = R[\underline{x}]/I_j,
\]
where $\underline{x}=(x_1,\ldots,x_{n_j})$ is a set of indeterminates and
$I_j\lhd R[\underline{x}]$ is an ideal. Choosing a list of generators of
$I_j$, we obtain a presentation
\[
    R[\underline{x}]^{m_j} \to R[\underline{x}]\to A_j \to 0.
\]
Taking into account the relations among the generators of the ideal $I_j$ this
presentation extends to 
\[
    R[\underline{x}]^{n_j} \to R[\underline{x}]^{m_j} \to R[\underline{x}]\to A_j \to 0,
\]
where the matrix describing the left-most map is usually referred to as the
first syzygy matrix of $I_j$ or $A_j$ respectively. Iteratively forming 
higher syzygies, this leads to free resolutions, i.e. exact sequences of free
$R[\underline{x}]$-modules. As $R[\underline{x}]$ is a polynomial ring over
a Dedekind domain, it has global dimension $n_j+1$ and hence $A_j$ possesses
a free resolution of length at most $n_j +1$. 
Working locally at a maximal ideal ${\mathfrak m} \subset R[\underline{x}]$, 
this allows e.g. the calculation of the ${\mathfrak m}$-depth of $A_j$ by 
the Auslander-Buchsbaum formula. 

In the subsequent sections, we shall encounter examples placing us in a 
particular situation, for which free resolutions are well understood: 
determinantal varieties corresponding to maximal minors. For these, 
$I_j$ is generated by the maximal minors of an $m \times n$ matrix defining 
a variety of codimension $(m-t+1)(n-t+1)$, where 
$t=\operatorname{min}\{m,n\}$. Most prominently, the Hilbert-Burch theorem 
(see for instance \cite{EisenbudBook}) relates Cohen-Macaulay codimension $2$ 
varieties to the $t$-minors of their first syzygy matrix, which is of size 
$t \times (t+1)$, and ensures the map given by this matrix to be injective.

\subsection{Presentation using valuations} \label{subsec:valuations}

An alternative way\footnote{Historically, this was actually the first method,
  pioneered by Deuring \cite{Deuring42} more than 10 years before the
  invention of schemes.} to present an arithmetic surface is the following. To
describe it it is convenient to assume that $R$ is a local ring. Then $R$ is
actually the valuation ring of a discrete valuation $v_K:K^\times\to\QQ$ of
its fraction field $K=\Frac(R)$. We choose a uniformizer $\pi$ of $v_K$ (i.e.\
a generator of the maximal ideal $\p\lhd R$) and normalize $v_K$ such that
$v_K(\pi)=1$. We denote the residue field of $v_K$ by $k$. In addition we make
the following assumption\footnote{More generally, we could have assumed that
  $(K,v_K)$ satisfies the {\em local Skolem property}, see
  \cite{GreenPopMatignon95}}:

\begin{assumption} \label{ass:LocalSkolem}
  The valuation $v_K$ is either henselian, or its residue field $k$ is
  algebraic over a finite field.
\end{assumption}

We fix a smooth projective curve $X_K$ over $K$. Note that $X_K$ is uniquely
determined by its function field $F_X$, and conversely every finitely
generated field extension $F/K$ of transcendence degree $1$ is the function
field of a smooth projective curve $X_K$. 

Let $X$ be an $R$-model of $X_K$, $X_s$ its special fiber and 
\[
       X_s = \cup_i \Xb_i
\]
its decomposition into irreducible components. Then each component $\Xb_i$ is
a prime divisor on the surface $X$. Because $X$ is normal, $\Xb_i$ gives rise
to a discrete valuation $v_i$ on $F_X$ such that $v_i(\pi)>0$. We normalize
$v_i$ such that $v_i(\pi)=1$. i.e.\ such that $v_i|_K=v_K$. By definition, the
residue field $k(v_i)$ of $v_i$ is the function field of the component
$\Xb_i$. In particular, $k(v_i)$ is function field over $k$ of transcendence
degree $1$. 

A discrete valuation $v$ on the function field $F_x$ is called {\em geometric}
if $v|_K=v_K$ and the residue field $k(v)$ is a finitely generated extension
of $k$ of transcendence degree $1$. Let $V(F_X)$ denote the set of geometric
valuations. Given a model $X$ of $X_K$, we write
\[
    V(X) := \{v_1,\ldots,v_r\}\subset V(F_X)
\]
for the set of geometric valuations corresponding to the components of the
special fiber of $X$. 

\begin{theorem}  \label{thm:valuations}
  The map
  \[
       X \mapsto V(X)
  \]
  is a bijection between the set of isomorphism classes of $R$-models of $X_K$
  and the set of finite nonempty subsets of $V(F_X)$. 
  
  Furthermore, given two models $X,X'$ of $X_K$, there exists a map $X'\to X$
  which is the identity on $X_K$ (and which is then automatically a
  modification) if and only if $V(X)\subset V(X')$.
\end{theorem}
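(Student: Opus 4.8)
The plan is to split the statement into three parts: (i) the map $X\mapsto V(X)$ is well-defined with values in the finite nonempty subsets of $V(F_X)$; (ii) it is injective; (iii) it is surjective, with the refinement statement about modifications proved along the way. Part (i) is essentially already in place from the discussion preceding the theorem: the special fiber $X_s$ of a proper flat $R$-model is a nonempty effective Cartier divisor with finitely many irreducible components $\bar X_i$, each of which is a prime divisor on the normal surface $X$ and hence (by normality) defines a discrete valuation $v_i$ on $F_X$; one checks $v_i|_K=v_K$ after the normalization $v_i(\pi)=1$, and that $k(v_i)$ is the function field of $\bar X_i$, so $v_i$ is geometric.

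For injectivity (ii), I would use the standard fact that a normal $R$-model $X$ of $X_K$ can be reconstructed from the generic fiber together with its local rings at the generic points of the special fiber: since $X$ is normal and $2$-dimensional, $\mathcal{O}_{X,\eta_i}$ is a DVR with valuation $v_i$, and $X$ is covered by the finitely many local schemes determined by $X_K$ and the $v_i$'s via the Zariski-Riemann / patching description. Concretely, for an affine open $U=\Spec A\subset X$, the ring $A$ is the intersection inside $F_X$ of $\mathcal{O}_{X_K}(U\cap X_K)$ with the valuation rings of those $v_i$ whose center meets $U$; hence $A$, and so $X$, is determined by $X_K$ and $V(X)$. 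This is where Assumption \ref{ass:LocalSkolem} (or the local Skolem property) enters: it guarantees that a geometric valuation $v$ genuinely has a center on some projective model, i.e.\ that there are no "missing points," so that the reconstruction is faithful and surjective.

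For surjectivity (iii) and the modification statement, given a finite nonempty set $S=\{w_1,\dots,w_r\}\subset V(F_X)$ I would first choose one projective $R$-model $X_0$ (e.g.\ the normalization of a projective embedding of $X_K$ spread out over $R$); then each $w_j$ has a center on $X_0$ after finitely many blow-ups — here again the Skolem-type assumption ensures the center is a closed point or a component, never escaping to infinity — and after normalizing we obtain a model $X_1$ with $S\subset V(X_1)$. To get $V(X)=S$ exactly, I would contract the components of the special fiber of $X_1$ not corresponding to any $w_j$; contractibility of a set of components holds on an arithmetic surface by Lipman's work (the relevant components can be blown down because the complementary configuration is still proper over $R$), yielding a normal model $X$ with $V(X)=S$. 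The modification assertion then falls out: if $V(X)\subset V(X')$, the identity on $X_K$ extends to a birational map $X'\dashrightarrow X$ which is a morphism because $X$ is normal and $X'$ has no extra components with center outside where $X'\to X$ is defined — more precisely one checks it is defined at every codimension-one point and then uses normality of $X$ plus properness to extend over the finitely many remaining closed points; conversely a modification $X'\to X$ can only add components to the special fiber, so $V(X)\subset V(X')$.

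The main obstacle I anticipate is \emph{surjectivity combined with exactness of the fiber}: showing that an arbitrary geometric valuation is realized as a component of \emph{some} model requires knowing its center does not "run off" the model, which is exactly the content of the henselian/finite-residue-field hypothesis, and showing one can realize \emph{precisely} the prescribed set $S$ — no more, no fewer — requires the contraction theorem for superfluous special-fiber components on arithmetic surfaces. Both are genuine inputs rather than formal manipulations; everything else (well-definedness, injectivity, the modification criterion) is bookkeeping with valuation rings and normality. I would therefore structure the write-up so that these two points are isolated as the crux, citing Lipman \cite{Lipman78} for contractibility and \cite{GreenPopMatignon95} for the local Skolem property, and keep the rest brief.
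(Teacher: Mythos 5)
The paper does not actually prove Theorem \ref{thm:valuations}; it defers entirely to \cite{Green96} and \cite{JulianDiss}, where the model attached to a finite set $S$ of geometric valuations is built \emph{directly}: one glues spectra of rings of the form $\{f\in\mathcal{O}_{X_K}(U_K)\;:\;v(f)\ge 0\text{ for all }v\in S\}$, and the substance of the argument is showing that these rings are of finite type over $R$ and that the special fiber of the resulting scheme has exactly one component per $v\in S$. Your route is genuinely different: realize each prescribed valuation as an exceptional divisor by finitely many point blow-ups (defensible, since $\operatorname{trdeg}(k(v)/k)=1$ makes $v$ divisorial, so the sequence of centers terminates on an excellent surface), then contract the superfluous fiber components by a Lipman-type contraction \cite{Lipman78}. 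You also correctly isolate ``realizing exactly the set $S$'' as the crux. So as a decomposition of the problem the proposal is sound and arguably more geometric than the construction in the references.

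Two points need repair, however. First, you assign Assumption \ref{ass:LocalSkolem} the job of ensuring that a geometric valuation ``has a center on some projective model, i.e.\ that there are no missing points''; that is automatic from the valuative criterion of properness and needs no hypothesis at all. In the cited proofs the local Skolem property does real work elsewhere: it produces enough $S$-integral functions with prescribed reduction behaviour so that the directly constructed charts are finitely generated and their special fibers separate the valuations in $S$. In your blow-up-and-contract version you must say where (if anywhere) the assumption enters --- most plausibly in guaranteeing that the contraction exists as a scheme --- otherwise the hypothesis of the theorem is unused, which should make you suspicious. Second, the injectivity argument is circular as stated: the identity $A=\mathcal{O}_{X_K}(U_K)\cap\bigcap_i\mathcal{O}_{v_i}$ holds only when the intersection runs over those $v_i$ \emph{whose center meets $U$}, and knowing which those are is data about the model $X$, not about the bare set $V(X)$ (intersecting over all of $V(X)$ gives a strictly smaller ring whenever $U$ misses a vertical component). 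The clean fix is to derive injectivity from the ``furthermore'' clause, which you prove anyway: $V(X)=V(X')$ gives morphisms in both directions extending $\mathrm{id}_{X_K}$, and these are mutually inverse by separatedness and density of the generic fiber.
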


\begin{proof}
See \cite{Green96} or \cite{JulianDiss}.
\hspace*{\fill}\qed
\end{proof}

By the above theorem models of a given smooth projective curve $X_K$ over a
valued field $(K,v_K)$ can be defined simply by specifying a finite list of
valuations. An obvious drawback of this approach is that it is not obvious how
to extract detailed information on the model $X$ from the set $V(X)$. A
priori, $V(X)$ only gives `birational' information on the special fiber
$X_s$. For instance, it is not immediate to see whether the model $X$ is
regular. 

So far, the above approach based on valuations has proved to be very useful
for the computation of semistable models (see \cite{superp}). We intend to
extend it to other problems in the future. In \S \ref{subsec:aposteriori} we
will see a first attempt to use it for desingularization.

\subsection{Computational tools}

In this section we report on some ongoing work to implement computational
tools for dealing with arithmetic surfaces and their desingularization.

\subsubsection*{Valuation based approach}
As we have explained in \S \ref{subsec:valuations}, it is in principle
possible to describe arithmetic surfaces over a local field purely in terms of
valuations. In order to use this approach for explicit computations, one
needs a way to write down, manipulate and compute with geometric
valuations. Fortunately, such methods are available (but maybe not as widely
known as they should). Our approach goes back to work of MacLane
(\cite{MacLane36a}, \cite{MacLane36b}). In the present context
(i.e.\ for describing models of curves over local fields) it has been
developed systematically in Julian R\"uth's PhD thesis (\cite{JulianDiss}). 

We will not go into details, but for later use we need to introduce the notion
of an {\em inductive valuation}. Let $K$ be a field with a discrete valuation
$v_K$ and valuation ring $R$ as before. Let $v$ be an extension of $v_K$ to a
geometric valuation on the rational function field $K(x)$. We assume in
addition that $v(x)\geq 0$ (i.e.\ that $R[x]$ is contained in the valuation
ring of $v$). Let $\phi\in R[x]$ be a monic integral polynomial, and let
$\lambda\in\QQ$ be a rational number satifying $\lambda>v(\phi)$. If $\phi$ is
a {\em key polynomial} for $v$ (see \cite{JulianDiss}, Definition 4.7) then we
can define a new geometric valuation $v'$ (called an {\em augmentation} of
$v$) with the property that
\[
   v'(\phi) = \lambda,\quad 
   v'(f) = v(f)\;\;\text{for $f\in K[x]$ with  $\deg(f)<\deg(\phi)$.}
\] 
See \cite{JulianDiss}, Definition 4.9. We write 
\[
    v' = [v,\;v'(\phi)=\lambda].
\]

The process of augmenting a given geometric valuation can be iterated. A
geometric valuation $v$ on $K(x)$ which is obtained by a sequence of
augmentations, starting from the Gauss valuation with respect to $x$, is
called an {\em inductive valuation}. It can be written as 
\begin{equation} \label{eq:valuations3}
    v = v_n = [v_0,v_1(\phi_1)=\lambda_1,\ldots, v_n(\phi_n)=\lambda_n].
\end{equation}
Here $v_0$ is the Gauss valuation, $\lambda_i\in\QQ$ and $\phi_i\in R[x]$ is
monic. Furthermore, $\phi_i$ is a key polynomial for $v_{i-1}$ and
$\lambda_i>v_{i-1}(\phi_i)$. By \cite{JulianDiss}, Theorem 4.31, every
geometric valuation $v$ on $K(x)$ with $v(x)\geq 0$ can be written as an
inductive valuation. 

The notion of inductive valuation can be extended in several
ways. Firstly, by replacing $x$ with $x^{-1}$ if necessary, we can drop the
condition $v(x)\geq 0$, Hence we can write every geometric valuation on $K(x)$
as an inductive valuation. Secondly, for the last augmentation step in
\eqref{eq:valuations3} we can allow the value $\lambda_n=\infty$. The
resulting $v_n$ is then only a {\em pseudo-valuation} and induces a true
valuation on the quotient ring $L:=K[x]/(\phi_n)$ (which is a field because
key polynomials are irreducible). Thirdly, given an arbitrary finite extension
$L/K$, we can compute the (finite) set of extensions $w$ of $v_K$ to $L$ as
follows. We write $L=K[x]/(f)$ for an irreducible polynomial $f\in K[x]$. If
$f$ is irreducible over the completion $\hat{K}$ of $K$ with respect to $v_K$,
then there exists a unique extension $w$ of $v$ to $L$ which can be written as
an inductive pseudo-valuation on $K[x]$ (with $\phi_n=f$). In general, let
$f=\prod_i f_i$ be the factorization into irreducibles over $\hat{K}$. Then
each factor $f_i$ gives rise to an extension $w_i$ of $v$ to $L$. Considering
$w_i$ as a pseudo-valuation on $K[x]$, MacLane shows that $w_i$ can be written
as a {\em limit valuation} of a chain of inductive valuations $v_n$. By this
we mean that $v_n$ is an augmentation of $v_{n-1}$, and for every
$\alpha=(g(x)\mod {(f)}) \in L$ there exists $n\geq 0$ such that
$w_i(\alpha)=v_n(g)=v_{n+1}(g)=\ldots$. 

MacLane's theory is constructive and can be used to implement algorithms for
dealing with discrete valuations on a fairly large class of fields. 
A Sage package written by Julian R\"uth called {\tt mac\_lane} 
(\cite{RuethMacLane})
is availabel under \url{github.com/saraedum/mac_lane}. It can be use to define
and compute with discrete valuations of the following kind:
\begin{itemize}
\item
  $\p$-adic valuations on number fields.
\item
  Geometric valuations $v$ on function fields $F/K$ (of dimension $1$) whose
  restriction to $K$ is either trivial, or can be defined by this package.
\end{itemize}
Given a valuation $v$ on a field $K$ of the above kind and a finite separable
extension $L/K$, it is possible to compute the set of all extension of $v$ to
$K$.

\subsubsection*{Chart based approach}

On the other hand, a description by affine charts as in \ref{subsec:charts}
not only emphasizes the similarity to the geometric setting, it also allows 
the use of computational techniques such as standard bases (whenever a 
suitably powerful arithmetic for computations in $R$ is available). This, 
in turn, opens up a whole portfolio of algorithms ranging from basic 
functionality like elimination or ideal quotients to more sophisticated 
algorithms such as blowing up and normalization, which eventually
permit to practically implement the above mentioned algorithms of Lipman and
of Cossart-Janssen-Saito for desingularization of $2$-dimensional schemes. 
Note at this point that neither of the two algorithms imposes the condition
of normality on the surfaces to be resolved.\\

In a nutshell, the desingularization problem for $2$-dimensional schemes is
the problem of finding suitable centers which improve the singularity without
introducing new complications. In this context, $0$-dimensional centers for
blow-ups usually do not pose any major problems: such blow-ups at different
centers may be interchanged, as they are isomorphisms outside their respective
centers and hence do not interact.  However, even resolving a $0$-dimensional
singular point in the geometric case may already require the use of
$1$-dimensional centers to achieve a regular model and normal crossing
divisors. These curves can exhibit significantly more structure than sets of
points, e.g. they can possess intersecting components or non-regular
branches. So the central problems in
resolving the singularities of $2$-dimensional schemes are ensuring
improvement in each step and treating $1$-dimensional loci which need to be
improved. In particular for the latter, the two aforementioned approaches
differ significantly. \\

The key idea behind Lipman's algorithm \cite{Lipman78} is that normal
varieties are regular in codimension $1$, i.e. that their singular locus is
$0$-dimensional. Thus a normalization step can always ensure that only sets of
points will be required for subsequent blowing up:

\begin{theorem}[\cite{Lipman78}]
Let $X$ be an excellent, noetherian, reduced scheme of dimension $2$, then $X$
posses a desingularization by a finite sequence of birational morphisms of
the form
$$X_r \stackrel{\pi_r \circ n_r}{\longrightarrow} \dots 
      \stackrel{\pi_2 \circ n_2}{\longrightarrow} X_1 
      \stackrel{\pi_1 \circ n_1}{\longrightarrow} X_0=X,$$
where $\pi_i$ denotes a blow up at a finite number of points, $n_i$ a 
normalization and $X_r$ is regular.
\end{theorem}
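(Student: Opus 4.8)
The plan is to follow Lipman's original argument, whose engine is a carefully chosen numerical invariant that strictly decreases under the normalization-then-blow-up operation, so that termination is forced. First I would reduce to the local situation: since $X$ is excellent, noetherian and of dimension $2$, the singular locus of its normalization is a finite set of closed points, so after one normalization step it suffices to desingularize at each such point independently (blow-ups at distinct closed points commute and do not interact, as noted in the excerpt). Thus the whole problem becomes: given a normal, excellent, two-dimensional local ring $(\OO_{X,\xi},\m)$, show that finitely many rounds of ``normalize the blow-up of the maximal ideal'' produce a regular ring.

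Next I would introduce Lipman's invariant. For a normal two-dimensional local ring one has a well-defined number measuring how far it is from being rational/regular — concretely, one uses the length of $H^1$ of the structure sheaf of a resolution (equivalently, the arithmetic genus of the fundamental cycle, or the quantity $\dim_k H^1(\tilde X,\OO_{\tilde X})$ for \emph{any} resolution $\tilde X\to\Spec\OO_{X,\xi}$, which Lipman shows is independent of the resolution), together with an auxiliary invariant controlling the embedding dimension / multiplicity. The heart of the proof is a monotonicity statement: if $(A,\m)$ is normal but not regular, and $A'$ is the normalization of the blow-up $\operatorname{Bl}_\m A$, then at every singular point of $A'$ the invariant does not increase, and it must strictly decrease at least somewhere, unless the blow-up was already regular. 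Proving this requires the standard machinery of normal surface singularities: the theory of complete ideals and their factorization, the behaviour of the fundamental cycle under blow-up, and the fact that $H^1(\OO)$ cannot increase along a normalized blow-up.

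I would then assemble the termination argument. The invariant takes values in a well-ordered set (a pair consisting of the ``genus'' drop and a multiplicity-type quantity, ordered lexicographically), it is bounded below, and it strictly decreases with each round that does not already yield regularity; hence the process stops after finitely many steps, at which point every local ring encountered is regular. Reassembling the local resolutions over the finitely many singular points of each intermediate normalization gives the global sequence $X_r\xrightarrow{\pi_r\circ n_r}\cdots\xrightarrow{\pi_1\circ n_1}X_0=X$ with $X_r$ regular, and each $\pi_i\circ n_i$ is birational and proper since normalization of an excellent scheme is finite and blow-up of a coherent ideal is projective.

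The main obstacle is the monotonicity of the invariant under a \emph{single} normalized blow-up — this is exactly where Lipman's deep input lies and where a naive attempt stalls. One has to control what normalization does: the blow-up of $\m$ in a non-regular normal surface singularity is typically itself non-normal, and when one normalizes, the delicate point is to show that the new singularities are ``no worse'' in the chosen numerical sense rather than merely finitely many. This rests on Lipman's analysis of rational singularities (a normal surface singularity with $H^1(\OO)=0$ is resolved by blowing up the maximal ideal and normalizing, and the class of rational singularities is the ``base case'') and on the semicontinuity of $\dim_k H^1(\OO)$; getting these facts in the required generality (excellent, possibly mixed characteristic, imperfect residue fields) is precisely the technical weight of \cite{Lipman78}, and I would cite it rather than reprove it.
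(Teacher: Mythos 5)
The paper does not actually prove this theorem: it is stated purely as a citation of Lipman's 1978 Annals paper, with no argument supplied. So the only meaningful comparison is between your sketch and Lipman's original proof. Your sketch gets the architecture broadly right --- normalize, reduce to a normal excellent two-dimensional local ring, use a cohomological invariant together with a multiplicity-type invariant, treat rational singularities as the base case, and defer the technical weight to the citation --- but it misstates the mechanism in two places. First, you cannot define the invariant as $\dim_k H^1(\tilde X,\OO_{\tilde X})$ ``for any resolution $\tilde X$'', since the existence of a resolution is precisely what is being proved; Lipman instead sets $h^1(Z):=\mathrm{length}\,H^1(Z,\OO_Z)$ for arbitrary normal modifications $Z\to\Spec\OO_{X,\xi}$ and proves the crucial finiteness statement $\sup_Z h^1(Z)<\infty$, which is where excellence enters in an essential way. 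Second, the monotonicity runs opposite to what you wrote: $h^1$ is \emph{non-decreasing} under domination of normal modifications and bounded above, hence \emph{stabilizes} along the sequence of normalized blow-ups; it does not strictly decrease. Only after stabilization, when every remaining singularity has become (pseudo-)rational, does a genuinely decreasing quantity take over --- the multiplicity, combined with the facts that for a rational surface singularity the blow-up of the maximal ideal is already normal and the embedding dimension equals the multiplicity plus one --- and force termination.

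Since both your proposal and the paper ultimately rest on citing \cite{Lipman78} for the hard content, the proposal is acceptable as a roadmap. But the termination argument as you describe it --- a single lexicographically ordered invariant that strictly drops at every normalized blow-up --- is not Lipman's and would not obviously work: no such strictly decreasing invariant is available before the $h^1$-stabilization step, and claiming one exists is the one point where your sketch, taken literally, has a gap rather than merely an omission.
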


While blowing up is algorithmically straightforward e.g. using an elimination 
(see e.g. \cite{FK1}), the hard step is the normalization. Although there has
been significant improvement in the efficiency of Grauert-Remmert style 
normalization algorithms in the last decade (see e.g. \cite{GLSe}, 
\cite{BDetal}), this is still a bottleneck when working over a Dedekind 
domain $R$ instead of a field. The crucial step here is the choice of a 
suitable test ideal, i.e. a radical ideal contained in the ideal of the 
non-normal locus and containing a non-zerodivisor. In the geometric case, 
the ideal of the singular locus -- generated by the original set of generators
and the appropriate minors of the Jacobian matrix -- is well-suited for this
task, but in the current setting it also sees fibre singularities which do 
not contribute to the non-regular locus. Hence the approximation of the 
non-normal locus by this test ideal is rather coarse and significantly 
impedes efficiency. In practice, a better approximation of the non-normal locus
is achieved by constructing a test ideal following an idea of Hironaka's
termination criterion: we use the locus where Hironaka's invariant $\nu^*$, 
i.e. the tuple of orders (in the sense of orders of power series) of the 
elements of a local standard basis, sorted by increasing order, is 
lexicographically greater than a tuple of ones.\\
 
The approach of Cossart-Janssen-Saito \cite{CJS} (CJS for short) on the other 
hand, avoids normalization completely and allows well-chosen $1$-dimensional 
centers, whenever necessary; when choosing centers, it takes into account 
the full history of blowing ups leading to the current situation. In 
constrast to Lipman's approach, this algorithm yields an embedded 
desingularization. Nevertheless, a key step is again the use of the locus
where $\nu^*$ lexicographically exceeds a tuple of ones. But then, no 
normalization follows, instead the singularities of this locus are first 
resolved before it is itself used as a $1$-dimensional center. Each arising
exceptional curve in this process remembers when it was created and whether
its center was of dimension $0$ or $1$, because this information is crucial
in the choice of center for ensuring improvement as well as normal crossing
of exceptional curves.

A beta version of the first algorithm is available as {\sc Singular}-library
reslipman.lib and is planned to become part of the distribution in the near 
future. A prototype implementation of the CJS-algorithm has been implemented 
and is closely related to an ongoing PhD-project on a parallel approach to 
resolution of singularities using the gpi-space parallelization environment
(for recent progress along this train of thought see \cite{BF}, \cite{gpi}).

\section{Explicit construction of wild quotient singularities} \label{sec:wildquot}

In this section we describe a series of examples for arithmetic surfaces with
interes\-ting singularities. The general construction is due to Lorenzini (see
\cite{Lorenzini10} and \cite{Lorenzini14}). Our contribution is to 
explictly describe the (local) ring of the singularity by generators and
relations. In the next section we also describe the desingularization in an
equally explicit way. 

Let $R$ be a discrete valuation ring, with maximal ideal $\p$, residue field
$k=R/\p$ and fraction field $K$. Let $v_K$ denote the corresponding discrete
valuation on $K$. We assume that $k$ has positive characteristic $p$ and that
$v_K$ is henselian (in particular, Assumption \ref{ass:LocalSkolem} holds). 

Let $X_K$ be a smooth, projective and absolutely irreducible curve over $K$,
of genus $g$. We assume that $X_K$ has potentially good reduction
reduction with respect to $v_K$. This means that there exists a finite
extension $L/K$ and a smooth model $Y$ of $X_L:=X_K\otimes_K L$ over the
integral closure $R_L$ of $R$ in $L$. Note that $R_L$ is a discrete valuation
ring corresponding to the unique extension $v_L$ of $v_K$ to $L$. We assume in
addition that $L/K$ is a Galois extension, and that the natural action of
$G:=\Gal(L/K)$ on $X_L$ extends to an action on $Y$. Under this assumption, we
can form the quotient scheme $X_Y/G$. It is an $R$-model of $X_K$. 

The model $Y$ is regular because $Y\to\Spec(R)$ is smooth by assumption.
However, the quotient scheme $X=Y/G$ may have singularities. In fact, let
$\xi\in X_s$ be a closed point on the special fiber of $X$, and let $\eta\in
Y_s$ be a point above $\xi$. Let $I_\eta\subset G$ denote the inertia subgroup
of $\eta$ in $G$. If $I_\eta=1$ then the map $Y\to X$ is \'etale in $\eta$. It
follows that $X$ is regular in $\xi$ because $Y$ is regular in $\eta$.

In general, the locus of points with $I_\eta\neq 1$ may consists of the entire
closed fiber $Y_s$ and hence be a subset of codimension $1$ on $Y$. To obtain
isolated quotient singularities we impose the following condition:

\begin{assumption} \label{ass:genericallyetale}
  The action of $G$ on the special fiber $Y_s$ is generically free.
\end{assumption}

Under this assumption, there are at most a
finite number of points $\eta\in Y_s$ with nontrivial
inertia $I_\eta\neq 1$. Let $\xi_1,\ldots,\xi_r\in X_s$ be the images of the
points $\eta\in Y_s$ with $I_\eta\neq 1$. Then $\xi_1,\ldots,\xi_r$ are
precisely the singularities of the model $X$.

\begin{remark}
  In Lorenzini's original setting, Assumption \ref{ass:genericallyetale} holds
  automatically because the curve $Y$ has genus $g(Y)\geq 2$. In our series of
  examples we have $g(Y)=0$, but the assumption holds nevertheless.
\end{remark} 

\subsection{An explicit example}

Let $p$ be a prime number, $K$ a number field and $\p\mid p$ a prime ideal of
$\OO_K$ over $p$. Let $v_K$ denote the discrete valuation on $K$ corresponding
to $\p$ and $R$ the valuation ring of $v_K$. Let $L/K$ be a Galois extension
of degree $p$ which is totally ramified at $\p$. This means that $v_K$ has a
unique extension $v_L$ to $L$. Let $\sigma$ be a generator of the cyclic group
$G=\Gal(L/K)$. Let $\pi_L$ be a uniformizer for $v_L$. We normalize $v_L$ such
that $v_L(\pi_L)=1$. Set
\[
    m := v_L(\sigma(\pi_L)-\pi_L).
\]
Then $m\geq 2$ is the first and only break in the filtraton of $G$ by higher
ramification groups. We let $u\in k^\times$ denote the image of the element
$(\sigma(\pi_L)-\pi_l)/\pi_L^m\in R^\times$. 

Let $X_K:=\PP^1_K$ be the projective line over $K$. We identify the function
field $F_X$ with the rational function field $K(x)$ in the indeterminate
$x$. Then $L(x)$ is the function field of $X_L=\PP^1_L$. We define an element
\[
    y := \frac{x-\pi_L}{\pi_L^m} \in L(x).
\]
Clearly, $L(x)=L(y)$, and so $y$, considered as a rational function on $X_L$,
gives rise to an isomorphism $X_L\cong \PP^1_L$. We let $Y$ denote the smooth
$R_L$-model of $X_L$ such that $y$ extends to an isomophism
$Y\cong\PP^1_{R_L}$. By an easy calculation we see that $\sigma(y)=ay+b$, with
$a\in R_L^\times$ and $b\in R_L$. Furthermore,
\[
     \sigma(y) \equiv y + u \pmod{\pi_L}.
\]
In geometric terms this means that the action of $G$ on $X_L$ extends to the
smooth model $Y$, and that the restriction of this action to the special fiber
$Y_s\cong\PP^1_k$ is generically free (and hence Assumption
\ref{ass:genericallyetale} holds). In fact, the action of $G$ is fix point
free on the affine line $\Spec k[y]$, and if $\eta\in Y_s$ denote the point
corresponding to $y=\infty$ then $I_\eta=G$.

Let $\xi\in X_s$ denote the image of $\eta$. By construction, $\xi$ is a wild
quotient singularity, and it is the only singular point on $X$. Our goal is to
write down explictly an affine chart $U=\Spec A\subset X$ containing $\xi$. 

To state our result we need some more notation. Let $\phi\in K[x]$ denote the
minimal polynomial of $\pi_L$ over $K$. Then
\[
    \phi = x^p + \sum_{i=0}^{p-1} a_ix^i = \prod_{k=0}^{p-1} (x-\sigma^k(\pi_L)),
\]
where $a_0,\ldots,a_{p-1}\in\p$. The constant coefficent
\[
     \pi_K:=a_0=N_{L/K}(\pi_L)
\]
is actually a prime element of $R$, i.e. $\phi$ is an Eisenstein
polynomial. 

The following lemma gives a characterization of the model $X$ in
terms of the set $V(X)$ of valuations corresponding to the irreducible
components of the special fiber (as in Theorem \ref{thm:valuations}). 

\begin{lemma} \label{lem:example1}
  We have 
  \[
      V(X) = \{v\}
  \]
  where $v$ is the inductive valuation on $K(x)$ extending $v_K$ given by
  \[
      v := [v(x) = 1/p,\; v(\phi) = m]. 
  \] 
  (See \S \ref{subsec:valuations} and \eqref{eq:valuations3} for
  the relevant notation.)
\end{lemma}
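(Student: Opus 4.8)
The plan is to identify the unique geometric valuation $v$ attached to the special fiber of $X = Y/G$ by pulling back the obvious valuation on $Y$. Since $Y \cong \PP^1_{R_L}$ via the coordinate $y = (x-\pi_L)/\pi_L^m$, its special fiber $Y_s \cong \PP^1_k$ is irreducible and the corresponding valuation on $L(x) = L(y)$ is simply the Gauss valuation $w$ with respect to $y$, normalized so that $w(\pi_L) = 1$ and $w(y) = 0$. First I would observe that, by Theorem \ref{thm:valuations} and the fact that $Y \to X$ is finite surjective with $X$ normal, the special fiber $X_s$ is also irreducible (it is the image of $Y_s$), so $V(X) = \{v\}$ for a single geometric valuation $v$ on $F_X = K(x)$; moreover $v$ is precisely the restriction $w|_{K(x)}$, up to the normalization $v(\pi_K) = 1$ forced by $v|_K = v_K$. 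Thus the whole content of the lemma is the computation of $w|_{K(x)}$ and the verification that it equals the inductive valuation $[v(x) = 1/p,\ v(\phi) = m]$.

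The key computational steps are then: (i) compute $v(x)$, and (ii) compute $v(\phi)$. For (i), from $x = \pi_L^m y + \pi_L$ and $w(y) = 0$, $w(\pi_L) = 1$, $w(\pi_L^m y) = m \geq 2 > 1 = w(\pi_L)$, so $w(x) = 1$ in the $v_L$-normalization; rescaling so that $v(\pi_K) = v(N_{L/K}(\pi_L)) = p \cdot w(\pi_L) = p$ becomes $1$, i.e.\ dividing by $p$, we get $v(x) = 1/p$. For (ii), using $\phi = \prod_{k=0}^{p-1}(x - \sigma^k(\pi_L))$ and evaluating at the point $y = 0$ (i.e.\ $x = \pi_L$) on $Y_s$: the factor with $k=0$ contributes $w(\pi_L - \pi_L)$, which is not quite right — rather one computes $w(x - \sigma^k(\pi_L))$ for each $k$. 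Writing $x - \sigma^k(\pi_L) = \pi_L^m y + (\pi_L - \sigma^k(\pi_L))$ and recalling $v_L(\sigma^k(\pi_L) - \pi_L) = m$ for $k \not\equiv 0$ (all higher ramification breaks equal $m$) while the $k$-dependence of $y$ is through $\sigma^k$ applied to a coordinate, one finds $w(x - \sigma^k(\pi_L)) = m$ for every $k$ once one checks that the "generic" value of $y$ along $Y_s$ does not create cancellation — indeed for $k = 0$ the factor is $\pi_L^m y$ with $w = m$, and for $k \neq 0$ it is $\pi_L^m y + (\pi_L - \sigma^k(\pi_L))$ where both summands have $w$-value exactly $m$ but their leading residues $u_k$ differ (this is where $\sigma(y) \equiv y + u \pmod{\pi_L}$ with $u \neq 0$ enters), so generically no cancellation occurs and the value is $m$. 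Summing, $w(\phi) = pm$ in the $v_L$-normalization, hence $v(\phi) = pm/p = m$ after rescaling. Finally I would check that $\phi$ is a key polynomial for the Gauss valuation $[v(x) = 1/p]$ — it is Eisenstein-type with respect to $x$, which makes it the natural key polynomial of the right degree — so that $[v(x) = 1/p,\ v(\phi) = m]$ is a legitimate inductive valuation, and that the two conditions $v(x) = 1/p$, $v(\phi) = m$ pin it down uniquely among geometric valuations (by \cite{JulianDiss}, Theorem 4.31, every geometric valuation on $K(x)$ is inductive, and the data of the key polynomials and their values determines it).

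The main obstacle I anticipate is step (ii): making precise the claim that evaluating $w$ on each factor $x - \sigma^k(\pi_L)$ gives exactly $m$ with no cancellation in the sum $\pi_L^m y + (\pi_L - \sigma^k(\pi_L))$. This requires understanding the residue of $y$ at the generic point of $Y_s$ relative to the residues $u_k := (\pi_L - \sigma^k(\pi_L))/\pi_L^m \bmod \pi_L \in k$, and arguing that $\bar y + u_k \neq 0$ as an element of the function field $k(\bar y)$ of $Y_s$ — which is clear since $\bar y$ is transcendental over $k$ and $u_k \in k$. Once this is in hand, the rest is bookkeeping with the two normalizations (the $v_L$-normalization on $Y$, in which $w(\pi_L) = 1$, versus the $v_K$-normalization on $X$, in which $v(\pi_K) = 1$, related by the ramification index $p$), and an appeal to the already-cited results of R\"uth's thesis to conclude that the inductive valuation with these invariants is the unique one, hence equals $v$.
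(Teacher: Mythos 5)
Your reduction to the Gauss valuation (showing $V(X)=\{v\}$ with $v=w|_{K(x)}$ for $w$ the Gauss valuation on $L(y)$) and your computations $v(x)=1/p$, $v(\phi)=m$ are correct; in particular the no-cancellation argument via the transcendence of $\bar y$ over $k$ is fine, and the decomposition $x-\sigma^i(\pi_L)=\pi_L^m y+(\pi_L-\sigma^i(\pi_L))$ is exactly the one the paper uses. The gap is in your final step: the values of a geometric valuation on the two elements $x$ and $\phi$ do \emph{not} determine it. What R\"uth's Theorem 4.31 gives is that $v$ admits \emph{some} inductive presentation, and that a presentation determines its valuation; it does not say that a valuation is determined by its values on finitely many polynomials. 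Indeed, any further augmentation $[v_0,\,v(x)=1/p,\,v(\phi)=m,\,v(\psi)=\lambda]$ along a key polynomial $\psi$ whose residual polynomial is coprime to that of $\phi$ still satisfies $v(x)=1/p$ and $v(\phi)=m$, and there are infinitely many such geometric valuations. Your computation only shows that $v$ lies (weakly) \emph{above} $[v_0,\,v(x)=1/p,\,v(\phi)=m]$ in the tree of valuations, i.e.\ $v(f)\geq [v_0,\,v(x)=1/p,\,v(\phi)=m](f)$ for all $f\in K[x]$; it does not give the reverse inequality.

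The characterization you actually need, and the one the paper invokes (\cite{JulianDiss}, Theorem 4.56), is that $[v_0,\,v(x)=1/p,\,v(\phi)=m]$ is the pointwise \emph{minimal} valuation among all $v'$ extending $v_K$ with $v'(x)\geq 0$ and $v'(\phi)\geq m$. Verifying that $v$ has this minimality property requires arguing with an \emph{arbitrary} such $v'$, not with $w$ itself: one extends $v'$ to a valuation $w'$ on $L(y)$, uses $m\leq v'(\phi)=\sum_i w'(\pi_L^m y+\pi_L-\sigma^i(\pi_L))$ together with $w'(\pi_L)=1/p$ and $w'(\pi_L-\sigma^i(\pi_L))\geq m/p$ to conclude $w'(y)\geq 0$, and then uses the minimality of the Gauss valuation $w$ among valuations nonnegative on $y$ to get $v(f)=w(f)\leq w'(f)=v'(f)$ for $f\in K[x]$. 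This quantifier-over-all-$v'$ step is what your proposal is missing, and the exact-value computation for $w$ alone cannot replace it.
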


\begin{proof}
It is clear that $V(Y)=\{w\}$, where $w$ is the Gauss valuation on
$F(X_L)=L(y)$ with respect to the parameter $y$ and the valuation $v_L$. Since
$Y\to X=Y/G$ is a finite morphism between (normal) models of their respective
generic fibers, we have $V(X)=\{v\}$, where $v$ is the restriction of $w$ to
the subfield $F(X_K)=K(x)\subset F(X_L)=L(y)$. It remains 
to identify $v$ with the inductive valuation given in the statement of the
lemma. 

We will use the  characterization of an inductive valuation which is implicit
in \cite{JulianDiss}, \S 4.4. Let $v'$ be a valuation on $K(x)$ which
extends $v_K$ and satifies
\[
   v'(x)\geq 0, \quad v'(\phi)\geq m.
\]
Then we claim that $v(f)\leq v'(f)$ for all $f\in K[x]$. By \cite{JulianDiss},
Theorem 4.56, the claim implies that 
\[
    v = [v(x)=1/p,\, v(\phi)=m].
\]
  
To prove the claim, we choose an extension $w'$ of $v'$ to the overfield
$L(y)$. Then 
\begin{equation} \label{eq:exa1.5}
   m \leq v'(\phi) = \sum_{i=0}^{p-1} w'(x-\sigma^i(\pi_L))
     = \sum_{i=0}^{p-1} w'(\pi_L^my+\pi_L-\sigma^i(\pi_L)).
\end{equation}
By definition we have
\begin{equation} \label{eq:exa1.6}
   w'(\pi_L)=v_L(\pi_L)=1/p, \quad 
   w'(\pi_L-\sigma^i(\pi_L))=v_L(\pi_L-\sigma^i(\pi_L))\geq m/p.
\end{equation}
Combining \eqref{eq:exa1.5}, \eqref{eq:exa1.6} and the strong triangle
inequality we conclude that $w'(y)\geq 0$. The valuation $w$ beeing the Gauss
valuation with respect to $y$ and $v_L$ this implies $w(f)\leq w'(f)$ for all
$f\in K[y]$. But $K[x]\subset K[y]$, and therefore $v(f)\leq v'(f)$ for all
$f\in K[x]$. This proves the claim and also the lemma.  \hspace*{\fill}\qed
\end{proof}

Let $D_K\subset X_K$ be the divisor of zeroes of $\phi$, and let $D\subset X$
be the closure of $D_K$. Let $U:=X-D$ denote the complement.

\begin{proposition} \label{prop:example1}
\begin{enumerate}
\item
  We have $U=\Spec A$, where $A\subset F_X=K(x)$ is the sub-$R$-algebra
  generated by the elements $x_0,\ldots,x_{p-1}$, where
  \[
      x_i := \pi_K^mx^i\phi^{-1}, \quad i=0,\ldots,p-1.
  \]
  The point $\xi$ lies on $U$ and corresponds to the maximal ideal
  \[
       \m := (\pi_K,x_0,\ldots,x_{p-1})\lhd A.
  \]
\item
  The ideal of relations between the generators $x_0,\ldots,x_{p-1}$ is
  generated by the $2\times 2$ minors of the matrix
  \[
     M:= \begin{pmatrix}
        x_0     &  x_1  \\
        x_1     &  x_2 \\
        \vdots  &  \vdots \\
        x_{p-2} &  x_{p-1} \\
        x_{p-1} &  z      
     \end{pmatrix}, \quad 
     \text{with $z := \pi_K^m - \sum_{i=0}^{p-1} a_i x_i$.}
  \]
\end{enumerate}
\end{proposition}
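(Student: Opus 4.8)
The plan is to work over the complement $U = X - D$ and identify its coordinate ring $A$ directly from the valuation-theoretic description of $X$ given in Lemma~\ref{lem:example1}, then exhibit the determinantal structure by a direct computation with the generators. First I would note that since $X = Y/G$ and $Y \cong \PP^1_{R_L}$ with coordinate $y$, the function field $F_X = K(x)$ sits inside $L(y)$, and the chart $U$ is, by construction, the preimage of the affine chart $\Spec R_L[y] \subset Y$ descended to $X$; more precisely $U = \Spec(R_L[y]^G \cap K(x)) = \Spec(R_L[y] \cap K(x))$ after checking that the $G$-invariants already live in $K(x)$ because $X_K = \PP^1_K$ is the quotient of $X_L$. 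So the real content of part (1) is the claim that $R_L[y] \cap K(x) = R[x_0, \ldots, x_{p-1}]$ with $x_i = \pi_K^m x^i \phi^{-1}$. For the inclusion $\supseteq$ I would check that each $x_i$ is a polynomial in $y$ with coefficients in $R_L$ that happens to be $G$-invariant: using $x = \pi_L^m y + \pi_L$ and $\phi = \prod_k (x - \sigma^k \pi_L) = \pi_L^{mp} \prod_k (y + (\pi_L - \sigma^k\pi_L)/\pi_L^m)$, together with $\pi_K = N_{L/K}(\pi_L)$ so that $v_L(\pi_K^m) = mp = v_L(\pi_L^{mp})$, one sees $\pi_K^m \phi^{-1}$ is a unit times $\prod_k(y + c_k)^{-1}$ — wait, that is not a polynomial; the correct statement is that $x_i = \pi_K^m x^i/\phi$ is regular on $U$ precisely because $D = V(\phi)$ has been removed, and one must instead verify $x_i \in R_L[y]$ by a valuation estimate at every geometric valuation $w'$ of $L(y)$ with $w'(y) \geq 0$, i.e. $w'(x_i) \geq 0$. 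This is exactly the kind of estimate carried out in the proof of Lemma~\ref{lem:example1}, so I would reuse \eqref{eq:exa1.5}--\eqref{eq:exa1.6}: for such $w'$ one gets $w'(\phi) \geq mp \cdot (1/p) = m$ after the change of variables, hence $w'(\pi_K^m x^i / \phi) \geq mp \cdot (1/p) + 0 - w'(\phi)$, and one needs $w'(\phi) \leq m$ on the chart, which holds because $v(\phi) = m$ is the exact value at the unique component. For the reverse inclusion $\subseteq$ I would argue that $R[x_0,\ldots,x_{p-1}]$ is already normal (being determinantal of the expected codimension, hence Cohen--Macaulay and regular in codimension one once we check the singular locus is just $\m$) and has the right generic fiber and the right special fiber up to the valuation data, so it must equal the normal model's coordinate ring by Theorem~\ref{thm:valuations}; alternatively, and more cheaply, show any $G$-invariant element of $R_L[y]$ regular away from $D$ is an $R$-polynomial in the $x_i$ by a degree/filtration argument on powers of $\phi^{-1}$.

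For the statement that $\xi$ corresponds to $\m = (\pi_K, x_0, \ldots, x_{p-1})$: I would compute the residue ring $A/\m$. Modulo $\pi_K$ all the $x_i$ vanish, and one checks directly from the relation $x_{p-1} \cdot (\text{something}) = x_{p-2}^2$ etc. that $A/(\pi_K, x_0, \ldots, x_{p-1}) = k$, so $\m$ is maximal; and $\m$ is the kernel of the specialization along the valuation $v$ with $v(x) = 1/p$, $v(\phi) = m$, because $v(x_i) = v(\pi_K^m) + i v(x) - v(\phi) = mp/p + i/p - m = i/p > 0$ for $i \geq 1$ and $v(x_0) = 0$ — hmm, that gives $v(x_0) = 0$, so I should instead locate $\xi$ as the point on the special fiber where the coordinate $y$ on $Y_s$ equals $\infty$, which pulls back to the closed point cut out by the maximal ideal I can read off from the $x_i$; the cleanest route is to observe $\eta \in Y_s$ is $y = \infty$ and its image $\xi$ is the unique non-regular point, which by the generic-freeness discussion is the unique closed point in the image of the ramification locus, and then match this with the vanishing locus of all $x_i$ together with $\pi_K$.

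For part (2), once the generators $x_0, \ldots, x_{p-1}$ and the auxiliary $z = \pi_K^m - \sum a_i x_i$ are in hand, the relations $x_i x_{j+1} = x_{i+1} x_j$ for the ``internal'' $2\times 2$ minors are immediate from $x_i = \pi_K^m x^i \phi^{-1}$: indeed $x_i x_{j+1} = \pi_K^{2m} x^{i+j+1}\phi^{-2} = x_{i+1}x_j$. The minors involving the last row, $x_i z = x_{i+1} x_{p-1}$, follow once we verify the single identity $\phi \cdot (\pi_K^m \phi^{-1}) = \pi_K^m$, i.e. $\sum_{i=0}^{p-1} a_i x_i + \pi_K^m x^p \phi^{-1} = \pi_K^m$, which after recognizing $\pi_K^m x^p \phi^{-1} = x \cdot x_{p-1}$ gives exactly $x \cdot x_{p-1} = \pi_K^m - \sum a_i x_i = z$; combined with the previous relations this yields $x_i z = x_i \cdot x\, x_{p-1} = (x \cdot x_i) x_{p-1} = x_{i+1} x_{p-1}$. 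So the $2\times 2$ minors of $M$ all vanish on $\Spec A$. The remaining, and I expect \emph{main}, obstacle is to show these minors \emph{generate} the full ideal of relations and not merely a subideal — equivalently, that the presentation $R[X_0, \ldots, X_{p-1}] \to A$ has kernel exactly $I_2(M)$. For this I would use the determinantal structure: $I_2(M)$ defines a variety of the expected codimension $p-1$ in $\AA^p_R$ (the generic determinantal variety of $2\times 2$ minors of a $p\times 2$ matrix whose last entry is a linear form in the others has codimension $p-1$), hence $R[X]/I_2(M)$ is Cohen--Macaulay by the Eagon--Northcott resolution and, being generically reduced (check on the generic fiber, where it is the cone over a rational normal curve, which is reduced and irreducible), is reduced; an integral domain of dimension $2$ mapping onto the domain $A$ with the same dimension and the same function field must be $A$ itself. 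The one thing to be careful about is that $z$ is not an independent variable but a specific linear combination, so I must confirm the codimension count is unaffected — this is where I would spend the most effort, likely by specializing $z$ to a genuine variable, invoking the generic result, and then slicing by the linear equation $z = \pi_K^m - \sum a_i x_i$, checking this slice is still of the expected codimension because $\pi_K^m - \sum a_i x_i$ is a non-zerodivisor on the generic determinantal ring.
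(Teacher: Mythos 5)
Your overall strategy for part (2) is the paper's, but both parts contain steps that would fail as written.

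For part (1): the identification $U=\Spec\bigl(R_L[y]\cap K(x)\bigr)$ is wrong. The preimage of $U=X-D$ in $Y$ is $Y$ minus the closure of $V(\phi)$, which consists of \emph{finite} points of the $y$-line ($x=\sigma^k(\pi_L)$ corresponds to $y=(\sigma^k(\pi_L)-\pi_L)/\pi_L^m$); it is not the chart $\Spec R_L[y]$. In fact the preimage $\eta$ of $\xi$ is the point $y=\infty$ of $Y_s$, which does \emph{not} lie in $\Spec R_L[y]$ — so your chart misses exactly the point the proposition is about, and the condition ``$w'(x_i)\geq 0$ for all $w'$ with $w'(y)\geq 0$'' tests membership in the wrong ring (you correctly notice that $\pi_K^m\phi^{-1}\notin R_L[y]$, but never repair this). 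The correct criterion, since $X$ is normal, is $\ord_Z(f)\geq 0$ for every prime divisor $Z\neq D$ of $X$: this means $f\in A_K=K[\phi^{-1},x\phi^{-1},\dots,x^{p-1}\phi^{-1}]$ (regularity along horizontal divisors) \emph{and} $v(f)\geq 0$ for the single vertical valuation $v$ of Lemma \ref{lem:example1}. The reverse inclusion $A\subseteq R[x_0,\dots,x_{p-1}]$ then requires an actual computation of $v$ on $A_K$ (writing $f=c_0+\sum c_{i,j}x^j\phi^{i-r}$ and translating $v(f)\geq 0$ into $v_K(c_{i,j})+j/p\geq m(r-i)$); your fallback — ``$R[x_0,\dots,x_{p-1}]$ is normal, hence equals the model's coordinate ring by Theorem \ref{thm:valuations}'' — is not a proof: that theorem classifies models by valuations, it does not identify rings of sections of open subsets, and normality of the determinantal ring is neither established nor needed. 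You also do not address why $U$ is affine at all (the paper uses ampleness of $D$).

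For part (2): the verification that the minors vanish, via $x\cdot x_{p-1}=z$, is correct and matches the paper. But your route to showing $I_2(M)$ is the whole kernel has a flaw: the generic fiber of $R[\underline{x}]/I_2(M)$ is \emph{not} the cone over a rational normal curve, because $z=\pi_K^m-\sum a_ix_i$ has the nonzero constant term $\pi_K^m$; it is an affine slice of that cone missing the vertex, so its irreducibility still needs an argument. Moreover, slicing a determinantal domain by a non-zerodivisor preserves Cohen--Macaulayness and codimension but not integrality, and reducedness of $A'=R[\underline{x}]/I_2(M)$ alone does not suffice: if $A'$ had a second component, the surjection $A'\to A$ between two-dimensional rings could kill it without forcing the kernel to vanish. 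The paper closes this by inverting $x_0$: one shows $A'[x_0^{-1}]=R[x_0,x_0^{-1},t\mid x_0\phi(t)=\pi_K^m]$ is a domain of dimension $2$, that $(x_0)^{\mathrm{rad}}=(x_0,\dots,x_{p-1})$ has coheight $1$, and then uses that all associated primes of the Cohen--Macaulay ring $A'$ are minimal to conclude $A'$ is a domain; only then does ``same dimension, same fraction field'' force the kernel to be zero. You would need to supply an argument of this kind.
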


\begin{proof} 
  It follows from \cite{Liu}, Corollary 5.3.24, that the divisor $D\subset X$
  is ample, and hence $U:=X-D=\Spec (A)$ is affine. Since $X$ is normal, the
  ring $A$ consists precisely of all rational functions $f\in K(x)$ with ${\rm
    ord}_Z(f)\geq 0$, for any prime divisor $Z\subset X$ distinct from $D$.

  A prime divisor $Z\subset X$ is either horizontal (i.e.\ the closure of a
  closed point on $X_K$) or equal to $X_s$. By Lemma \ref{lem:example1}, $X_s$
  is a prime divisor with corresponding valuation $v$ on $K(x)$. 
  It follows that
  \[
      A = \{ f\in A_K \mid v(f) \geq 0 \},
  \]
  where 
  \[
      A_K = K[\phi^{-1}, x\phi^{-1},\ldots, x^{p-1}\phi^{-1}].
  \]
  In order to make the condition $v(f)\geq 0$ more explicit, we write $f\in
  A_K$ in the form
  \[
      f = c_0 +\sum_{i=0}^{r-1} \sum_{j=0}^{p-1} c_{i,j}x^j\phi^{i-r},
  \]
  with $c_0, c_{i,j}\in K$. 
  Then Lemma \ref{lem:example1} shows that 
  \[
       v(f) = \min\{ v_K(c_0), v_K(c_{i,j})+j/p-m(r-i) \}.
  \]
  So the condition $v(f)\geq 0$ is equivalent to
  \[
        v_K(c_{i,j})+j/p \geq m(r-i),
  \]
  for $i=0,\ldots,r-1$ and $j=0,\ldots,p-1$. It follows that
  \[
     A = R[x_0,\ldots,x_{p-1}], \quad \text{where $x_j:=\pi_K^mx^j\phi^{-1}$.}
  \]
  This is the first part of Statement (i); the second part is obvious.

  To prove Statement (ii) we let $I$ be the ideal in the polynomial ring
  $R[\underline{x}]=R[x_0,\ldots,x_{p-1}]$ generated by the $2\times 2$-minors
  of the matrix $M$. It is easy to check that the generators of $A$ satisfy
  these relations. Therefore, we have a surjective map
  $A':=R[x_0,\ldots,x_{p-1}]/I\to A$. We want to prove that $A'=A$.

  Let $A'':=A'[x_0^{-1}]$ and consider the matrix $M$ with entries in
  $A''$. By definition we have ${\rm rk}M\leq 1$, and the upper left entry
  $x_0$ is a unit. An elementary argument shows that there exists $t\in A''$
  such that 
  \[
     x_0\phi(t)=\pi_K^m, \quad x_i = t^ix_0,\quad i=1,\ldots,p-1.
  \]
  It follows that
  \[
       A''=R[x_0,x_0^{-1},t \mid x_0\phi(t)=\pi_K^m].
  \]
  In particular $A''/R[x_0,x_0^{-1}]$ is a finite flat and generically
  \'etale extension of degree $p$. We deduce that $A''$ is an integral domain
  of dimension $2$. 
  Looking at the equations defining $A'$, it is easy to see that 
  \[
      (x_0)^{\rm rad} = (x_0,\ldots,x_{p-1})
  \]
  and that $A'/(x_0)^{\rm rad}\cong R$ has dimension $1$. Together with $\dim
  A''=2$ this implies that $\dim A'=2$. Therefore, $A'$ is a determinantal
  ring of the `expected' codimension $(p-2+1)(2-2+1)=p-1$. Now a theorem of 
  Eagon and Hoechster shows that $A'$ is Cohen-Macaulay 
  (see \cite{EisenbudBook}, Theorem 18.18 for a textbook reference). Every 
  associated prime of a Cohen-Macaulay ring is minimal
  (\cite{EisenbudBook}, Corollary 18.10). Since $A''=A'[x_0^{-1}]$ is an integral
  domain, it follows that $A'$ is an integral domain as well.

  The analysis of $A''$ from above also shows that 
  \[
          A''_K=A'_K[x_0^{-1}]=A_K[x_0^{-1}] = K[x,\phi^{-1}].
  \]
  It follows that $J={\rm ker}(A'\to A)$ is an ideal of codimension $\geq
  1$. But $A,A'$ have the same dimension, so $J$ consists of zero divisor. On
  the other hand, we have shown above that $A'$ is an integral domain. Hence
  $J=0$. This completes the proof of Proposition \ref{prop:example1}. 
\hspace*{\fill}\qed
\end{proof}

\begin{example} \label{exa:example1}
  The simplest special case of Proposition \ref{prop:example1} where the
  resulting singularity is not a complete intersection is for $p=3$. To make
  this even more explicit, we set $K:=\QQ$ and let $v_K$ denote the $3$-adic
  valuation on $K$ and $R:=\ZZ_{(3)}$ the valuation ring (the localizaton of
  $\ZZ$ at $3$). Moreover, we set
  \[
        \phi := x^3-3x^2+3.
  \]
  The splitting field $L/K$ of $\phi$ is a Galois extension of degree $3$
  which is totally ramified at $p=3$. Indeed, we can factor $\phi$ as
  \[
      \phi = (x-\pi)(x-\sigma(\pi))(x-\sigma^2(\pi)) 
           = (x-\pi)(x-\pi-\pi^2+3\pi)(x-\pi+\pi^2-3),
  \]
  where $\pi$ is prime elements for the unique extension $v_L$ of
  $v_K$ to $L$. We see that 
  \[
       m := v_L(\pi-\sigma(\pi)) = 2.
  \]  
  The resulting singularity $\xi$ of the model $X$ of $X_K=\PP^1_K$
  constructed above is a rational triple point. 
\end{example}

\begin{remark}
  The generic fiber $X_K$ of our model $X$ is a curve of genus zero and so
  is not, strictly speaking, an example of the situation studied by
  Lorenzini. But we can easily modify our construction to get examples with
  arbitrary high genus. For instance, choose $m>1$, $p\nmid m$ and consider
  the Kummer cover $Y_K\to X_K$ of smooth projective curves with generic
  equation 
  \[
       Y_K: \; y^m =\phi(x).
  \]
  Then $g(Y_K)\geq 2$ (except for $p=3$ and $m=2$ when $g(Y_K)=1$). Let $Y$
  denote the normalization of the $R$-model $X$ inside the function field of
  $Y_K$. Then $Y$ is a (normal) $R$-model of $Y_K$. It can easily be shown
  that $Y$ has a unique singular point $\eta$ (which is the unique point in
  the inverse image of $\xi\in X$), and that $\eta\in Y$ is a wild quotient
  singularity in the sense of \cite{Lorenzini14}. We intend to study this
  situation in a subsequent paper.
\end{remark}       

\section{An explicit resolution}

To keep the construction of a desingularization in an explicit example as
concise as possible we now focus on the specific Example
\ref{exa:example1}. This case already illustrates the general situation quite
well, but is still sufficiently small to avoid lengthy explicit computations.

Set $K:=\QQ$ and let $v_K$ denote the $3$-adic
valuation on $K$ and $R:=\ZZ_{(3)}$ the valuation ring (the localizaton of
$\ZZ$ at $3$). Let $v_0$ denote the Gauss valuation on $K(x)$ with respect to
$x$. We define an inductive valuation $v$ on $K(x)$ as follows:
\[
      v := [v_0,\, v(x)=1/3,\, v(x^3-3x^2+3) = 2].
\]
Let $X$ be the model of $X_K:=\PP^1_K$ with $V(X)=\{v\}$. We have shown in the
preceeding section that $X$ has a unique singularity $\xi$ with a affine open
neighborhood $U=\Spec A$, where
\[
     A = R[x,y,z]/I,
\]
and where  $I$ is the ideal generated by the $2$-minors of the matrix
$$
   M= \begin{pmatrix} x &   y \cr
                      y &   z \cr
                      z & \;3x-3z-9
      \end{pmatrix}.
$$
The singular point $\xi$ corresponds to the maximal ideal $\m=(3,x,y,z)\lhd
A$.

\subsection{Explicit blowups and Tjurina modifications} 
\label{subsec:blowups} 

Our goal is to construct explicitly a desingularization $f:X'\to X$ of $\xi$.
For ease of notation we replace the projective scheme $X$ by the
affine open subset $U=\Spec A$. 

We not only know that $A$ is Cohen-Macaulay of codimension $2$, we are in an
even better setting, the situation of the Hilbert-Burch theorem, which then
implies that a free resolution of $A$ is of the form
$$0 \longrightarrow R[x,y,z]^2 \stackrel{M}{\longrightarrow} 
                    R[x,y,z]^3 \longrightarrow
                    R[x,y,z] \longrightarrow 
                    A \longrightarrow 0,$$
i.e. the Eagon-Northcott complex of $M$.\\

At first glance this seems to be unrelated to our task of desingularizing
$A$. However, these structural observations point us to well known results 
in the complex geometric case: In the late 1960s, Gergana
Tjurina classified the rational triple point singularities over the complex
numbers in \cite{Tj} and constructed minimal desingularizations thereof in 
a direct way. Our given matrix $M$ structurally corresponds to a singularity
of type $H_5$ in Tjurina's article, which we will refer to as $Y$ here and 
for which a presentation matrix (over ${\mathbb C}[x,y,z,w]$) is of the form
$$N=\begin{pmatrix} x & y \cr
                    y & z \cr
                    z & wx-w^2
  \end{pmatrix}.$$
The last entry can be replaced by $wx-wz-w^2$ without changing the
analytic type of the singularity as is shown in the classification of simple 
Cohen-Macaulay codimension $2$ singularities in \cite{FN}. This similarity 
suggests to try and mimic the philosophy of Tjurina's choice of centers for 
the desingularization of $X$.\\

Tjurina's first step towards a resolution of singularities is nowadays called
a Tjurina modification and is based on the observation that at each point 
of $Y$ except the origin the row space of the presentation matrix defines 
a unique direction in ${\mathbb C}^2$ and hence a point in the Grassmanian 
of lines in $2$-space. 
Resolving indeterminacies of this rational map into the Grassmanian then 
yields the Tjurina transform which can then be described by the equations 
$$N \cdot \begin{pmatrix} s \cr t \end{pmatrix}= 
          \begin{pmatrix} 0 \cr 0 \cr 0 \end{pmatrix}.$$
(For a more detailed treatment of Tjurina modifications see the first section
of \cite{FZ}.) Three further blow-ups, each at the ($0$-dimensional)
singular locus, which happens to be the non-normal crossing locus of the 
exceptional curves in the second and third blow-up, then lead Tjurina to a 
desingularization. The exceptional locus 
of this sequence of blow-ups consists of $6$ curves of genus zero, where 
the one originating from the Tjurina modification is the only one with 
self-intersection $-3$; all others have self-intersection $-2$. The dual
graph of the resolution is of the form:
\begin{figure}[h]
\begin{center}
 \setlength{\unitlength}{1.2mm}
\begin{picture}(60,30)

\multiput(10,20)(10,0){5}{\circle{2}}
\put(30,10){\circle*{2}}

\put(11,20){\line(1,0){8}}
\put(21,20){\line(1,0){8}}
\put(31,20){\line(1,0){8}}
\put(41,20){\line(1,0){8}}
\put(30,19){\line(0,-1){8}}

\put(8.5,23){$-2$}
\put(18.5,23){$-2$}
\put(28.5,23){$-2$}
\put(38.5,23){$-2$}
\put(48.5,23){$-2$}
\put(25,9.3){$-3$}

\end{picture}

 \caption{Tjurina's intersection graph $H_5$}
\end{center}
\end{figure}
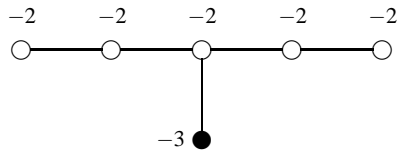

Returning to our setting, we can mimic these steps, obtaining the following
as ideal of the Tjurina transform:
$$I_{X_1}= \langle sx-ty,sy-tz,sz-t(3z-3x-9) \rangle $$
\label{tyurinamod}
By direct computation, it is easy to see that $X_1$ is regular except above 
$3$ and that above $3$ the non-regular locus is contained in the
chart $t \neq 0$. The exceptional curve $C_0$ which arose in this blow-up is a 
${\mathbb P}^1$ and corresponds to the ideal $\langle x,y,z,3 \rangle$. 
Passing to the chart $t \neq 0$, we can harmlessly eliminate
the variables $y$ and $z$ according to the first two generators. This 
essentially leaves a hypersurface described by the ideal
$$I_{X_1, {\rm new}}= \langle s^3x-3s^2x+3x+9 \rangle 
                    \subset R[x,s]$$
and an exceptional curve $I_{C_0}=\langle x,3 \rangle$. The non-regular locus 
of this hypersurface corresponds to $\langle x,s,3 \rangle$ as a direct 
computation shows; this is the center of the upcoming blow-up, which leads to
$3$ charts, two of which only contain regular points and only see normal 
crossing divisors. In the remaining chart ($y_1 \neq 0$), the strict transform 
is given by
$$I_{X_2}= \langle 3-y_2s, s^2y_0-s^2y_0y_2+y_0y_2+y_2^2 \rangle,$$
the strict transform of the exceptional curve $C_0$ by $\langle 3,y_0,y_2 \rangle$ and
the two components $C_1$ and $C_2$ of the new exceptional curve $E_2$ by 
$\langle 3,s,y_2(y_0+y_2) \rangle$. As the non-regular locus is given by
$\langle 3,s,y_0,y_2 \rangle$ and the non-normal crossing locus of the 
exceptional curves is the same point, analogous to Tjurina's setting, this 
point has to be chosen as upcoming center. 
After blowing up this point of $X_2$, we see in one chart that each of
the two components $C_1$ and $C_2$ of the preceding exceptional curve $E_2$ meets 
one component of the new exceptional curve $E_3$; more precisely, $C_1$ meets $C_3$
and $C_2$ meets $C_4$. In another chart, we see that the transform of $C_0$ meets 
both $C_3$ and $C_4$ at the origin, which is also the only singular point. 
Blowing up this point then introduces yet another exceptional curve $C_5$ 
meeting $C_0$, $C_3$ and $C_4$; at this stage, the strict 
transform is regular and the exceptional divisor is normal crossing. 
All exceptional curves are $-2$-curves except the $-3$-curve $C_0$.
Hence we obtained the dual graph:
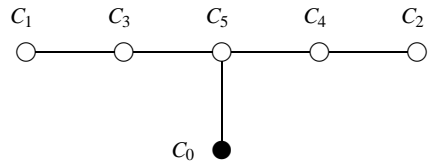
\begin{figure}[h]
\begin{center}
 \setlength{\unitlength}{1.3mm}
\begin{picture}(60,30)

\multiput(10,20)(10,0){5}{\circle{2}}
\put(30,10){\circle*{2}}

\put(11,20){\line(1,0){8}}
\put(21,20){\line(1,0){8}}
\put(31,20){\line(1,0){8}}
\put(41,20){\line(1,0){8}}
\put(30,19){\line(0,-1){8}}

\put(8.5,23){$C_1$}
\put(18.5,23){$C_3$}
\put(28.5,23){$C_5$}
\put(38.5,23){$C_4$}
\put(48.5,23){$C_2$}
\put(25,9.3){$C_0$}

\end{picture}

\caption{The intersection graph of the desingularization of $X$}\label{fig2}
\end{center}
\end{figure}

An explicit comparison of the computations of Tjurina and of the
one presented in our setting shows that that all computational steps as well
as the final result are analogous in both cases. This certainly raises the
question whether other singularities from Tjurina's list also have an analogue
arising from the construction of Section \ref{sec:wildquot} and what geometric
properties the singularities corresponding to the matrices of the previous
section might exhibit.

\begin{remark}
\begin{enumerate}
\item In the above calculation, we saw that we could safely replace the matrix $N$, 
which is the normal form in the classification of simple Cohen-Macaulay codimension $2$
singularities \cite{FN}, by a matrix say $N'$ which directly corresponds to the 
original matrix $M$, differing only by using a variable $w$ instead of $\pi_k=3$. 
The isomorphism of the local rings of the singularities represented by $N$ and 
$N'$ does not involve any change of $w$, whence we could hope for an equivalent 
isomorphism for $M$. This, however, does not exist, as the isomorphism over 
$\CC$ involves the multiplicative inverse of $3$.
\item As in the explicit example here, all the determinantal singularities from 
Proposition \ref{prop:example1}
allow a Tjurina modification at the origin of the respective chart at the
beginning of the desingularization; this provides an exceptional curve $C_0$. After 
this step, we see only one singular point, an $A_{pm-1}$ singularity. This latter 
singularity is well known to have a dual graph of resolution which is a chain with 
$pm-1$ vertices and $pm-2$ edges, where the middle vertex corresponds to the youngest 
exceptional curve. This middle vertex is the position, where the edge connecting the 
vertex corresponding to $C_0$ to the chain. 

\end{enumerate}
\end{remark}

\subsection{A posteriori description via
  valuations} \label{subsec:aposteriori} 

We return to our original notation, i.e.\ $X$ denotes the $R$-model of
$X_K=\PP^1_K$ with $V(X)=\{v\}$ (and not its affine subset $\Spec A$). Also,
$x$ again denotes the original coordinate function on $X_K$. 

The computation of the previous section show that there exist a
desingularization $f:X'\to X$ of $\xi$ such that the exceptional fiber
$E:=f^{-1}(\xi)$ is a normal crossing divisor and consists of $6$ smooth
rational curves, with an intersection graph given in Fig.\ \ref{fig2}.
The arithmetic surface $X'$ is itself an $R$-model of $X_K$ and is hence
completely determined by the set $V(X')$ of geometric valuations of $K(x)$
corresponding to the irreducible components of the special fiber $X_s'$. But
$X_s'$ consists precisely of the strict transform $C_6$ of $X_s$ (which
corresponds to the valuation $v_6:=v$) and the $6$ components $C_0,\ldots,C_5$
of the exceptional divisor.

The obvious question is: what are the valuations corresponding to the
components $C_i$, $i=0,\ldots,5$? 

\begin{proposition} \label{prop:aposteriori}
  Let $v_i$ denote the valuation on $K(x)$ corresponding to the component
  $C_i$, for $i=0,\ldots,5$. We normalize $v_i$ such that $v_i(3)=1$ (i.e.\
  such that $v_i|_K=v_K$). Then $v_0$ is the Gauss valuation with respect to
  the coordinate $x$. For $i=1,3,5$, 
  \[
     v_i  = [v_0,\,v_i(x) = r_i], \quad r_i = 
     \begin{cases}
        1/3, & i=5,\\
        1/2, & i=3,\\
        1,   & i=1.
     \end{cases}
  \]
  For $i=2,4$ we have
  \[
      v_i = [v_0,\,v_i(x)=1/3,\, v_i(\phi) = s_i],\quad s_i = 
     \begin{cases}
        4/3, & i=4,\\
        5/3, & i=2.
     \end{cases}
  \]
\end{proposition}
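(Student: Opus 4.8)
The plan is to identify each valuation $v_i$ by matching it against the explicit chart computations of \S\ref{subsec:blowups}, using the dictionary of Theorem \ref{thm:valuations}: each component $C_i$ of the special fiber of $X'$ is a prime divisor, hence gives a geometric valuation on $K(x)$ normalized by $v_i(3)=1$, and this valuation is computed by taking the order of vanishing along $C_i$ in whichever affine chart $C_i$ appears. So for each $i$ I would locate, in the sequence of blow-ups, a chart in which $C_i$ is (an open piece of) a coordinate hyperplane, read off the local equation of $C_i$ there, and then express the generator $x$ of $K(x)$ in terms of the chart coordinates to compute $\ord_{C_i}(x)$ (and, where needed, $\ord_{C_i}(\phi)$). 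Since every valuation in sight has $v_i(x)\ge 0$ except possibly where we are near $x=\infty$, the MacLane/inductive-valuation description of \S\ref{subsec:valuations} then pins down $v_i$ uniquely from the pair of values $\bigl(v_i(x),v_i(\phi)\bigr)$: an inductive valuation $[v_0,\,v(x)=r]$ is determined by the single value $r$, and $[v_0,\,v(x)=1/3,\,v(\phi)=s]$ by the pair $(1/3,s)$, provided one checks the requisite key-polynomial and inequality conditions ($\lambda>v_{i-1}(\phi_i)$), which is routine here.

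Concretely, $C_0$ arises from the Tjurina modification as the curve $\langle 3,x,y,z\rangle$, i.e.\ $\langle 3,x\rangle$ after eliminating $y,z$; here $x$ is a local coordinate cutting out $C_0$, so $\ord_{C_0}(x)=1$ (after rescaling so $\ord_{C_0}(3)=1$), giving $v_0$ the Gauss valuation, as claimed. For the curves $C_1,C_3,C_5$ I expect each to appear, in the appropriate chart, as a divisor on which $x$ vanishes to order exactly $1$, $1/2$, $1/3$ respectively relative to $3$ — these are exactly the valuations $[v_0,v(x)=1]$, $[v_0,v(x)=1/2]$, $[v_0,v(x)=1/3]$, the last being (the restriction to $K(x)$ of) the Gauss valuation in the coordinate $y=x/\pi_L^m$ pulled back, consistent with Lemma \ref{lem:example1}. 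For $C_2,C_4$, which are the two components of $E_2$ produced after blowing up $\langle x,s,3\rangle$ on the hypersurface $s^3x-3s^2x+3x+9$, the coordinate $x$ already has $v(x)=1/3$, so these valuations refine $[v_0,v(x)=1/3]$; the second datum is the value on $\phi=x^3-3x^2+3$, and tracking $\phi$ through the substitution $x\mapsto y_1y_2$, $s\mapsto y_1$ (or whatever the explicit blow-up chart substitution is) and reading off the order along each of the two branches $y_2$ and $y_0+y_2$ of $E_2$ should yield $s_4=4/3$ and $s_2=5/3$.

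The main obstacle is bookkeeping: one must carry the coordinate function $x\in K(x)$ correctly through each chart change — Tjurina transform, then three successive point blow-ups — keeping track of which new variable is which old rational function, and then correctly identify which of the two components of a reducible exceptional curve is $C_2$ versus $C_4$ (and likewise match $C_1\leftrightarrow C_3$, $C_2\leftrightarrow C_4$ according to the adjacencies in Fig.\ \ref{fig2}). A clean way to organize this is to compute, for the strict transform $C_6$ of $X_s$, the value $v_6(x)=1/3$ and $v_6(\phi)=2$ as a consistency check (this is just $v$ of Lemma \ref{lem:example1}), and to use the known self-intersection numbers and the intersection graph as a cross-check on the computed values: the valuations $v_i$ together with the requirement that $X'$ dominates $X$ (so $v\in V(X')$, i.e.\ $V(X)\subset V(X')$, per Theorem \ref{thm:valuations}) and that the intersection pattern matches Fig.\ \ref{fig2} over-determines the answer, so any arithmetic slip will show up. I would also remark that, once the $v_i$ are found, one can reverse the logic: the proposition provides an \emph{a posteriori} purely valuation-theoretic certificate that the set $\{v_0,\dots,v_6\}$ defines a regular model, bypassing the chart computation — though proving regularity directly from this list is not attempted here.
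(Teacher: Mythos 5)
Your overall strategy is exactly the paper's: locate each $C_i$ in an explicit chart of the blow-up sequence of \S\ref{subsec:blowups}, rewrite the original coordinate function $x$ (and $\phi$) in the chart variables, and read off $\ord_{C_i}(x)$ and, where needed, $\ord_{C_i}(\phi)$, which by the MacLane description pins down the inductive valuation. The paper, too, only carries out the case $i=0$ and declares the remaining cases analogous. However, the one case you do work out contains a real error: you conflate the chart coordinate (called $x$ in \S\ref{subsec:blowups}, equal to $\pm 9\phi^{-1}$ as a rational function on $X_K$) with the original coordinate $x$ on $\PP^1_K$. The chart coordinate has order $2$ along $C_0$ (from $x_0\phi(s)=-9$ with $\phi(s)$ a unit at the generic point of $C_0$), not $1$; and if the \emph{original} $x$ had $\ord_{C_0}(x)=1$ you would get $[v_0,\,v(x)=1]$, which is $v_1$, not the Gauss valuation. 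The correct argument is that the original $x$ equals the chart variable $s$, which is a unit along $C_0$ generating its residue field, whence $v_0(x)=0$ and $v_0$ is Gaussian.

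A second concrete slip: $C_2$ and $C_4$ are \emph{not} the two components of $E_2$. By the chart computation, $E_2=C_1\cup C_2$, the next blow-up produces $E_3=C_3\cup C_4$ (with $C_1$ meeting $C_3$ and $C_2$ meeting $C_4$), and a final blow-up produces $C_5$. So tracking $\phi$ only through the second blow-up cannot yield both $s_2=5/3$ and $s_4=4/3$; the computation of $v_3,v_4,v_5$ requires carrying $x$ and $\phi$ through all four modifications. Your proposed cross-checks (multiplicities from ramification indices, the intersection graph, and $v_6=v$) are sound and indeed appear in the paper's subsequent remark; with the two corrections above your plan coincides with the paper's proof.
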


\begin{proof}
  This can be checked by a direct (but somewhat involved) computation, using
  the explict description of the desingularization by affine charts in \S
  \ref{subsec:blowups}. As an illustration of the general method let us
  convince ourselves that the Gauss valuation $v_0$ corresponds to the
  component $C_0$.

It suffices to consider the first step of the desingularization, the Tyurina
modification $X_1\to X$. We use the notation from p.\pageref{tyurinamod}. The
affine chart of $X_1$ defined by $t\neq 0$ has the form 
\[
    \Spec R[x_0,s\mid s^3x_0-3s^2x_0+3x_0+9=0]
\]
and the exceptional divisor $E_1\subset X_1$ is given on this chart by
$I_{E_1}=(x_0,3)$. So $\Spec \FF_3[s]$ is an affine open of $E_1$, and hence
$E_1$ is a projective line. We claim that $E_1$, as a prime divisor on $X$,
gives rise to the valuation $v_0$ (the Gauss valuation with respect to $x$). 

We write $x_0,s$ as rational functions in $x$:
\[
   x_0 = 9\phi^{-1}, \quad s = \frac{x_1}{x_0} = x.
\]
Now we see that the generators of the ideal $I_{E_1}$ have positive valuation
($v_0(3)=1$, $v_0(x_0)=2$) and $s$ is a $v_0$-unit and is a generator of its
residue field. This shows that the prime divisor $E_1\subset X_1$ corresponds
to the valuation $v_0$. As the component $C_0$ of the desingularizaton $X'\to X$
is simply the strict transform of $E_1$ under the map $X'\to X_1$, we have
proved the proposition for $i=0$. For $i=1,\ldots,5$ one can proceed in a
similar way. 
\hspace*{\fill}\qed
\end{proof}

\begin{remark}
\begin{enumerate}
\item We have found the set $V(X')=\{v_0,\ldots,v_6\}$ after computing the
  desingularization $X'\to X$. By Theorem \ref{thm:valuations}, $X'$ is
  determined by $V(X')$. Could we have found $V(X')$ by some other method, and
  would this give an alternative way to compute desingularization? In this
  simple case it is indeed possible to check the regularity of $X'$ (and the
  fact that $X_s'$ is a normal crossing divisor) purely in terms of the set of
  valuations $\{v_0,\ldots,v_6\}$. More details will be given elsewhere. 
\item
  If we accept that $X'$ is regular and $X_s'$ is a normal crossing divisor,
  it is easy to compute the self intersection numbers of the irreducible
  components $C_i$, as follows. Let 
  \[
      \tilde{E}:= (3) = \sum_{i=0}^6 m_i C_i \in{\rm Div}(X)
  \]
  be the principal divisor of the prime $3$. For each $i$ the integer $m_i$
  (the {\em multiplicity} of the component $C_i$) is equal to the ramification
  index of the extension $K(x)/K$ with respect to $v_i$. It is easy to read
  off $m_i$ from the explicit description of the $v_i$ in Proposition
  \ref{prop:aposteriori}:
  \[
      m_0=1,\; m_1=1,\; m_2=3,\; m_3=2,\; m_4 = 3,\; m_5 = 3,\ m_6= 3.
  \]
  Since $\tilde{E}$ is a principal divisor, we have
  \[
      0 = (C_i.\tilde{E}) = \sum_{j=0}^6 m_j (C_i.C_j),
  \]
  for $i=0,\ldots,6$, see e.g.\ \cite{SilvermanAT}, \S IV.7. The component
  graph from Fig.\ \ref{fig2} tells us what $(C_i.C_j)$ is for $i\neq
  j$ (either $1$ or $0$). Now the self intersection numbers $(C_i,C_i)$ can be
  computed easily. We find that
  \[
       (C_i.C_i) = 
        \begin{cases}
          -3, & i=0,\\
          -2, & i=1,\ldots,5,\\
          -1, & i=6.
        \end{cases}
  \]
\end{enumerate}
\end{remark}

\bibliographystyle{spmpsci} \bibliography{sources}

\end{document}